\documentclass[10pt,reqno]{amsart}
\usepackage[top=1.2in, left=1.1in, right=1.1in, bottom=.9in]{geometry}

\usepackage{bbm}
\usepackage[draft]{optional}
\usepackage[british,american]{babel}

\usepackage{mathrsfs}
\usepackage{amsfonts, amsmath, wasysym}
\usepackage{comment}

\usepackage{enumitem}
\usepackage{amssymb,amsthm}
\usepackage{nccmath}
\usepackage{MnSymbol}

\usepackage{todonotes}
\newcounter{todocounter}

\usepackage{
latexsym,
nicefrac,
}
\usepackage{url}

\definecolor{darkgreen}{rgb}{0,0.5,0}
\definecolor{darkred}{rgb}{0.7,0,0}
\usepackage[colorlinks, 
citecolor=darkgreen, linkcolor=darkred,urlcolor=black
]{hyperref}
\usepackage{esint}
\usepackage{bibgerm}
\usepackage[normalem]{ulem}

\theoremstyle{plain}
\newtheorem{lemma}{Lemma}[section]
\newtheorem{thm}[lemma]{Theorem}
\newtheorem{prop}[lemma]{Proposition}

\theoremstyle{definition}
\newtheorem{defn}[lemma]{Definition}

\newtheorem{rmk}[lemma]{Remark}
\numberwithin{equation}{section}

\newcommand{\HH}{\mathcal{H}}
\newcommand{\al}{\alpha}

\newcommand{\ga}{\gamma}

\newcommand{\de}{\delta}
\newcommand{\om}{\omega}

\newcommand{\Si}{\Sigma}

\newcommand{\D}{\mathbb{D}}
\newcommand{\La}{\Lambda}
\newcommand{\R}{\ensuremath{{\mathbb R}}}

\newcommand{\osc}{\text{osc}}

\newcommand{\upto}{\uparrow}

\DeclareMathSymbol{\intprod}{\mathbin}{MnSyC}{'270}
\newcommand{\T}{{\mathcal T}}
\newcommand{\That}{{ \widehat{\T} }}

\newcommand{\norm}[1]{\Vert#1\Vert}

\newcommand{\beq}{\begin{equation}}
\newcommand{\eeq}{\end{equation}}
\newcommand{\beqs}{\begin{equation*}}
\newcommand{\eeqs}{\end{equation*}}
\newcommand{\beqa}{\begin{equation}\begin{aligned}}
\newcommand{\eeqa}{\end{aligned}\end{equation}}
\newcommand{\beqas}{\begin{equation*}\begin{aligned}}
\newcommand{\eeqas}{\end{aligned}\end{equation*}}
\newcommand{\half}{\frac{1}{2}}

\newcommand{\leqs}{\lesssim}
\newcommand{\geqs}{\gtrsim}

\newcommand{\abs}[1]{\vert#1\vert} 

\newcommand{\eps}{\varepsilon}
\newcommand{\na}{\nabla}

\newcommand{\tr}{\text{tr}}
 
\newcommand{\Loj}{{\L}ojasiewicz }

\title{Topping's conjecture for solutions of harmonic map flow with isolated bubbles}
\date{\today}
\author{Melanie Rupflin}

\author{Alex Waldron}

\author{Sebastian Woodward}

\begin{document}

\begin{abstract}
    By establishing a weighted \L ojasiewicz-Simon inequality for maps which are close to a fixed harmonic map modulo scaling, we prove Topping's conjecture on continuity of the body map in the case of finite time singularities at which only isolated bubbles form. 
\end{abstract}

\maketitle

\thispagestyle{empty}

\section{Introduction}

Let $(\Sigma,g)$ be an oriented Riemannian surface and $u : \Sigma \times \left[ 0, T \right) \to N,$ with $T < \infty,$ a classical solution of the gradient flow of the Dirichlet energy $$E(u):=\half \int_\Si \abs{du}^2 dV_g,$$ i.e. of the harmonic map flow
$$\frac{\partial u}{\partial t} = \T(u),$$
where $\T(u) = \tr_g \nabla du$ is the tension field. 
The classical result \cite{struwehmsurfaces} of Struwe establishes that the only mechanism that prevents the smooth continuation of the flow past $T$ is the concentration of energy on smaller and smaller balls near a finite set of points $S$, resulting in the bubbling off of harmonic spheres.
As $t$ approaches such a finite singular time $T$, the maps $u(t)$ converge weakly in $H^1(\Sigma, N)$ and smoothly away from the finite singular set $S$ to the so-called body map
$$u(T) = \lim_{t \nearrow T} u(t).$$
A well-known conjecture due to Peter Topping \cite[p. 286]{toppingwindingbehavior} asserts that real-analyticity of the target Riemannian manifold $(N,h)$ is sufficient to rule out essential discontinuities in the body map at a finite-time singularity of the harmonic map flow from a 2-dimensional domain.

This conjecture can be seen as an antidote to the discovery, also in \cite{toppingwindingbehavior}, that for a certain (non-real-analytic) target manifold and carefully chosen initial data, the body map at the first singular time does not extend continuously over the singular point.

In this brief paper, we discuss how certain \Loj estimates, which are related to 
estimates obtained by the first-named author in \cite{rupflinharmonic}, can be obtained and used to fulfil a criterion for continuity of the body map established by the second-named author in \cite{waldronweighted}. This will allow us to prove Topping's conjecture for general analytic targets at singular times at which only isolated bubbles form.

As discussed below, the assumption that all bubbles are isolated has a natural formulation in terms of the (outer) energy scale which was introduced in 
 \cite[Definition 4.2]{songwaldron} and whose definition we recall here.
\begin{defn}
Let $x\in \R^2$ and $\rho > 0.$ Given an $H^1$ map $u: \R^2 \to N,$  define the \emph{(outer) energy scale} 
\begin{equation}\label{lambdadef}
\lambda_{\rho, x}(u) = \inf \left\{ \lambda \in [0,\rho] \,\, \vrule \,\, \sup_{\lambda<r<\rho} E( u, \D_r(x)\setminus \D_{r/2}(x) ) < \eps_0 \right\}.
\end{equation}
Here $\eps_0 > 0$ is a constant  which depends only on the geometry of $N$ and which is chosen small enough so that $\eps$ regularity results such as \cite[Theorem 3.4]{songwaldron} are applicable.  
Further define
\begin{equation*}
    \underline{\lambda}_{\rho, x}(u) = \inf_{y \in \D_{\rho / 2}(x)} \lambda_{\rho,y}(u).
\end{equation*}
\end{defn}

Our main result can then be formulated as follows. 

\begin{thm}\label{thm:main} Let $(N, h)$ be a compact real-analytic manifold. Let $u : \R^2 \times \left[ 0, T \right) \to N$ be a classical solution of harmonic map flow with uniformly bounded Dirichlet energy.
Fix $x \in \R^2$ and suppose that
\begin{equation}\label{main:assumption}
    \limsup_{t \nearrow T} \underline{\lambda}_{\sqrt{T - t}, x}(u(t)) \cdot \sup_{\D_{\sqrt{T - t}}(x)} |du(t)| < \infty.
\end{equation}
Then the body map $u(T)$ is continuous at $x$ and there is no neck between $u(T)$ and any bubble $\om:\R^2\to N$ that forms at $x,$ i.e. we have $u(T,x)=\om(\infty).$
\end{thm}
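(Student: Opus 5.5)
The plan is to verify the continuity criterion of \cite{waldronweighted}, which (in the form we use) says the following. Suppose that on the parabolic neck region $\D_{\sqrt{T-t}}(x)\setminus \D_{R\lambda(t)}(x)$ the tension satisfies a weighted integrability bound of the form
\[
\int_{T-\delta}^T \Big(\int_{\D_{\sqrt{T-t}}(x)} |\T(u(t))|^2 \,\mu_t \, dV\Big)^{1/2} dt<\infty ,
\]
with a weight $\mu_t$ comparable to $r^2$ away from the bubble scale. Then the body map is continuous at $x$ and the neck has zero length, so that $u(T,x)=\om(\infty)$. Given this reduction, the whole proof amounts to establishing a weighted \Loj--Simon inequality and running the standard \Loj argument with it.

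The first step is to use assumption \eqref{main:assumption} to pin down the local picture. Let $\lambda(t)=\underline\lambda_{\sqrt{T-t},x}(u(t))$, attained approximately at a centre $y(t)$. Then $\lambda(t)\,|du(t)|\le C$ on the parabolic ball, so after rescaling by $\lambda(t)$ around $y(t)$ the maps have bounded gradient. No energy concentrates at scales below $\lambda$, so there are no bubbles on bubbles. Outside $\D_{\lambda}(y(t))$ all dyadic annuli carry energy below $\eps_0$. By compactness and $\eps$-regularity, for $t$ near $T$ the map $u(t)$ is therefore close, modulo translation and scaling, to a single harmonic sphere $\om$ glued to a nearly constant map on the neck, in a weighted norm adapted to the annuli. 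The set of possible $\om$ is compact, since the energy is bounded and the target is analytic. I would then fix one limit $\om$ and work near its orbit under scaling and translation.

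The main step, and the main obstacle, is the weighted \Loj inequality. For $v$ defined on $\D_\rho$ and close in the above sense to $\om_{\lambda,y}$ with $\lambda\ll\rho$, we want
\[
|E(v,\D_\rho)-E(\om)|^{1-\gamma}\le C\,\|\T(v)\|_{L^2(\mu)}+\text{(boundary terms on }\partial \D_\rho),
\]
with $\gamma\in(0,1/2]$ and the weighted norm $\|\cdot\|_{L^2(\mu)}$ as above. I would prove this in three stages.
\begin{enumerate}
\item Pull back to $S^2$ by stereographic projection, where the weight becomes essentially the round metric. Then use the Lyapunov--Schmidt reduction onto the finite-dimensional kernel of the Jacobi operator at $\om$, in the style of \cite{rupflinharmonic}. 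Analyticity of $N$ enters here, through the finite-dimensional \Loj inequality for the reduced analytic function.
\item Handle the scaling and translation degeneracy by optimising over $(\lambda,y)$ in the family $\om_{\lambda,y}$.
\item Control the neck. There the energy is small, and we need a Poincaré or three-annulus type estimate showing that the energy on the neck is bounded by the weighted tension plus the oscillation at the boundary.
\end{enumerate}
I expect the hardest part to be making the boundary term on $\partial\D_{\sqrt{T-t}}$ compatible with the flow. I would try to absorb it using smooth convergence away from $x$ on the outer annulus, whose contribution is integrable in time.

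Finally, I would apply the inequality along the flow. Let $\mathcal E(t)$ be the localized energy minus $E(\om)$. Using the local energy identity,
\[
\frac{d}{dt}\mathcal E(t)\le-\int|\T(u(t))|^2\varphi+\text{error},
\]
and the error is integrable in time. Combined with the \Loj inequality, the usual argument differentiating $\mathcal E^{\gamma}$ yields $\int_{T-\delta}^T\|\T\|_{L^2(\mu)}\,dt<\infty$. Switching from the fixed cutoff to the shrinking parabolic scale costs only dyadic-in-time errors, which sum by the same argument. This is precisely the criterion of \cite{waldronweighted}, and it gives continuity of $u(T)$ at $x$ together with $u(T,x)=\om(\infty)$.
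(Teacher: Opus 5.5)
\textbf{There is a genuine gap, in the step you yourself flag as hardest:} closing the \Loj argument along the flow with an unweighted localized energy on $\D_{\sqrt{T-t}}(x)$. Your inequality carries boundary terms on $\partial\D_{\sqrt{T-t}}$, and your energy identity carries an error that is only time-integrable. Neither is enough.

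On annuli at the parabolic scale, $u(t)$ is governed by the behaviour of the body map near $x$, which is exactly what is unknown. Smooth convergence away from $x$ only controls fixed annuli, not annuli shrinking like $\sqrt{T-t}$. Summing a scale-invariant boundary term (oscillation or energy on the outer annulus) over parabolic time requires something like $\sum_k E(u,A_k)^{1/2}<\infty$ over dyadic annuli $A_k$ around $x$. That is essentially the continuity you are trying to prove; a priori you only know $\sum_k E(u(T),A_k)<\infty$.

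Similarly, in $-\frac{d}{dt}\mathcal E^\gamma\geq c\norm{\T}-\gamma\mathcal E^{\gamma-1}e(t)$ the error is multiplied by $\mathcal E^{\gamma-1}\to\infty$. So integrability of $e$ does not suffice, and $\mathcal E$ need be neither monotone nor of one sign.

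Relatedly, the criterion you attribute to \cite{waldronweighted} is not the one the paper uses. Corollary 4.3 there takes as hypothesis a \Loj inequality of the form \eqref{weightedloj:loj}, holding with one fixed critical value $E_1$ for all $t$ near $T$.

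\textbf{The missing idea} is to replace the localized energy and the plain tension by Struwe's Gaussian-weighted energy $\Phi$ centred at $(x,T)$ and the self-similar tension $\That=\T-\frac{x}{2\tau}\intprod du$. This gives two things.
\begin{enumerate}
\item Monotonicity is exact: $\Phi$ is non-increasing, and its time derivative is, up to a positive constant, $-\norm{\That}_\tau^2$, with no error term.
\item The weighted Poincar\'e inequality of \cite[Lemma 2.1]{waldronweighted}, after rescaling $\int r^2\abs{du}^2e^{-r^2/4}+\norm{\T(u)}^2_{L^2(\D_2)}\leq C\norm{\That(u)}_1^2$, controls the weighted energy outside the unit ball quadratically by the weighted tension. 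This is exactly your boundary and body-map contribution.
\end{enumerate}
This is what lets Theorem~\ref{thm:weightedloj} reduce, after cutting off to a map constant outside $\D_2$, to the unweighted estimate of Proposition~\ref{prop:Loj-L^2} near a single bubble. The paper proves that estimate from Simon's $H^{-1}$ inequality on $S^2$, losing a factor $(\log\mu)^{\gamma/2}$ (the source of the logarithm in \eqref{weightedloj:loj}). It adds a separate rescaling argument when $\mu\geq\norm{\tau(u)}_{L^2}^{-8}$.

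\textbf{A secondary error:} the possible bubbles are not compact ``since the energy is bounded and the target analytic'', because harmonic spheres of bounded energy can themselves bubble. Compactness comes from the single-bubble condition \eqref{weightedloj:definitionofscrE0} defining $\HH_0$. Analyticity gives finiteness of $\mathcal E_0$, and monotonicity of $\Phi$ then fixes one $E_1$ for all $t$ near $T$.
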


Before proving this result, we first explain why the theorem above corresponds to singular points at which only a single bubble form. To this end, we recall that the work of Lin-Wang \cite{linwangenergyidentity} and Ding-Tian \cite{dingtian} establishes that there exists a sequence of times $t_i \nearrow T$ such that near each singular point $x\in S$, $u(t_i)$ enjoys a so-called ``bubble-tree decomposition'' into the base map and a finite number of non-constant harmonic maps $\om^j:\R^2\to N$, called bubbles, and that in this convergence there is no energy loss. 
It is well known that using simple parabolic estimates (e.g. \cite[Theorem 3.4]{songwaldron}), such a decomposition can be carried out along an arbitrary sequence of times $t_i \nearrow T$ after passing to a subsequence. We also recall that the points $x_i^k\to x$ and scales $r_i^k\to 0$ at which these bubbles form are so that 
\beq\label{est:bubbling}
\frac{r_i^k}{r_i^l} + \frac{r_i^l}{r_i^k} + \frac{\abs{x_i^k - x_i^l}^2}{r_i^k r_i^l} \to \infty ,
\eeq
as $i \to \infty$ for each $k \neq l$. 
We refer to the paper of Topping \cite{toppingannals} for a summary of this theory. 

We furthermore recall that standard local energy estimates imply that the points and scales at which the bubbles form along the flow are so that $\abs{x_i^k-x}+r_i^k=o(\sqrt{T-t_i})$ and that smallness of energy $E(u(t),B_r(y))\leq \eps_0 $
 yields pointwise control on the gradient of $\abs{du(y,t)}\leqs r^{-1}$. 
Thus the second factor in the main assumption \eqref{main:assumption}  scales like $(\min_k r_i^k)^{-1}$ along $t_i\upto T$. At the same time, if multiple bubbles were to form at $x$ along $t_i\upto T$  then \eqref{est:bubbling} would ensure that $\underline{\lambda}_{\sqrt{T - t_i}, x}(u(t_i))(\min r_i^k)^{-1}\to \infty$. Thus, the main assumption \eqref{main:assumption} is indeed equivalent to asking that no more than one bubble forms at the point $x$.

The new ingredient in the proof of the main theorem is a uniform \L ojasiewicz(-Simon) inequality, Proposition \ref{prop:Loj-L^2}, for maps $\R^2 \to N$ which are constant outside a fixed ball and close to a fixed harmonic map modulo scaling and translation. This estimate follows by a version of some arguments from \cite{rupflinharmonic}, compare also \cite{malchiodirupflinsharp}; in fact, the simpler estimate required here allows for a much easier proof than what was needed in \cite{malchiodirupflinsharp,rupflinharmonic} thanks to the presence of additional symmetries, and we carry out this short proof in \S \ref{sec:weightedestimates}. The \L ojasiewicz inequality of Proposition \ref{prop:Loj-L^2} readily leads to a Gaussian-weighted \L ojasiewicz inequality, Theorem \ref{thm:weightedloj}, for general maps which are close on a ball to a fixed harmonic map modulo scaling. This weighted estimate is exactly the input required to apply the main result of \cite{waldronweighted}.

\subsection*{Acknowledgements}
MR was partially supported by the grant \textit{UKRI3754: Quantitative theory for geometric variational problems}
from the Engineering and Physical Sciences Research
Council. 
SW was supported by the Engineering and Physical Sciences Research
Council studentship 2879243.

\subsection*{AI Statement.} AI was not used in any part of this work. 

\section{$L^2$-\L ojasiewicz estimates near simple bubble trees on $\mathbb{R}^2$}\label{sec:weightedestimates}

We consider maps
$ u : \mathbb{R}^2 \to N\hookrightarrow \R^K$ into 
an analytic manifold $N$
 which are $L^\infty$ and $H^1$ close to a simple bubble tree consisting of a constant body-map and a single bubble and which are constant outside of a fixed size ball. 
 
Denoting the corresponding bubble by $\omega^*:S^2\to N$, we want to show that for such maps the energy 
defect
$
E(u)-E(\omega^*)
$
is controlled in terms of the $L^2$-norm of the tension.

To get such a \L ojasiewicz estimate, we can use that a slight modification of Simon's proof from \cite{leonsimon} gives \L ojasiewicz estimates which are valid with the $H^{-1}$-norm on the right-hand side instead of the $L^2$-norm and in an $L^\infty \cap H^1$ neighbourhood instead of a $C^{2,\al}$ neighbourhood of each critical point.

Namely, we can use

\begin{prop}(Variant of \cite[Theorem $3$]{leonsimon})\label{prop:Loj-Simon}
Let $N$ be any closed analytic manifold and let
$\om^* : S^2 \to N$ be any harmonic map.
Then there exist constants $\eps_1>0$, $C<\infty$, and $\ga\in(1,2]$ such that
\beq\label{est:Loj-Simon}
\abs{E(u)-E(\om^*)}
\leq
C \, \norm{dE(u)}_{H^{-1}(S^2, g_{S^2})}^\ga
\eeq
holds for all $u\in H^1(S^2,N)$ with
$\norm{u-\om^*}_{L^\infty(S^2)}
+\norm{\na(u-\om^*)}_{L^2(S^2 , g_{S^2})}
<\eps_1$.
\end{prop}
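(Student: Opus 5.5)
We follow Simon's Lyapunov--Schmidt reduction in \cite{leonsimon}, run in the $L^\infty\cap H^1$ topology with the $H^{-1}$ norm measuring the gradient. We work in the closed Riemannian manifold $(S^2,g_{S^2})$. First we pass to a linear problem. Since $\norm{u-\om^*}_{L^\infty}<\eps_1$ is small, we can write $u=\pi_N(\om^*+v)$, or use the exponential map $u=\exp_{\om^*}(v)$, with $v$ a section of $(\om^*)^*TN$. Then $v\in H^1\cap L^\infty$, and both $\norm{v}_{L^\infty}$ and $\norm{\na v}_{L^2}$ are controlled by the assumed closeness. Set $\mathcal E(v):=E(\exp_{\om^*}v)$. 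This functional has the form $\int F(x,v,\na v)$ with $F$ analytic in $(v,\na v)$ and quadratic in $\na v$. Moreover $\norm{d\mathcal E(v)}_{H^{-1}}$ is comparable to $\norm{dE(u)}_{H^{-1}}$, because the differential of the exponential map is an invertible multiplier, bounded in $L^\infty$ with $\na$ of it in $L^2$. Hence it suffices to prove \eqref{est:Loj-Simon} for $\mathcal E$ near $v=0$.

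Second, we carry out the Lyapunov--Schmidt reduction. The Jacobi operator $L=d^2\mathcal E(0):H^1\to H^{-1}$ is a self-adjoint elliptic operator of the form $\Delta+$(smooth zeroth-order terms, since $\om^*$ is smooth). It is therefore Fredholm of index zero, and its kernel $K$ is finite dimensional and consists of smooth sections. Let $P$ be the $L^2$-projection onto $K$. Consider $\mathcal N(v):=M(v)+Pv$, where $M(v)=d\mathcal E(v)$. At $0$ its derivative $L+P:H^1\to H^{-1}$ is an isomorphism. We apply the analytic implicit function theorem to $\mathcal N$ on the space $X=H^1\cap L^\infty$, with target $Y=H^{-1}$ enlarged appropriately.

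This is the main obstacle. The map $v\mapsto M(v)$ is not analytic from $H^1$ to $H^{-1}$ in two dimensions, because of terms like $A(v)(\na v,\na v)$. The $L^\infty$ bound helps, since $A(v)\na v\na v\in L^1$, but $L^1\not\subset H^{-1}$. Our plan is to use the divergence structure: $M(v)=-\mathrm{div}(\partial_pF(x,v,\na v))+\partial_zF(x,v,\na v)$. Here $\partial_pF\in L^2$ is analytic in $v$ in $H^1\cap L^\infty$, while $\partial_zF$ is only in $L^1$. We therefore take $Y=H^{-1}+L^1$ and $X=H^1\cap L^\infty$. Alternatively, we invert $L+P$ on $W^{1,p}$-type spaces. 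The key point is that elliptic estimates for $L+P$ ensure that $\Psi:=(\mathcal N)^{-1}$, restricted to $\mathcal N(v)\in K+(\text{small in }Y)$, maps back into $X$ and is Lipschitz from $H^{-1}$-type norms into $H^1$. If handling the $L^1$ term becomes too delicate, a fallback is Simon's route: regularise first by showing that near $\om^*$ one can compare with $C^{2,\al}$ data.

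Third, we reduce to the finite-dimensional kernel. We define $f(\xi):=\mathcal E(\Psi(\xi))$ for $\xi\in K$ near $0$. This $f$ is real-analytic on the finite-dimensional space $K$, so the classical Łojasiewicz inequality gives $\abs{f(\xi)-f(0)}^{1-\theta}\le C\abs{\na f(\xi)}$ for some $\theta\in(0,\half]$. Setting $\ga=1/(1-\theta)\in(1,2]$, we now estimate, for given $v$, with $\xi=Pv$. First, $\norm{\na f(\xi)}\leqs\norm{M(\Psi(\xi))}$, and this is at most $\norm{M(v)}_{H^{-1}}$ up to a constant, by the Lipschitz bound $\norm{v-\Psi(\xi)}_{H^1}\leqs\norm{M(v)}_{H^{-1}}$. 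Second, $\abs{\mathcal E(v)-f(\xi)}\leqs\norm{M(v)}_{H^{-1}}^2$, by Taylor-expanding $\mathcal E$ along the segment from $\Psi(\xi)$ to $v$ and using the previous bound. Combining these gives $\abs{E(u)-E(\om^*)}\le C\norm{dE(u)}_{H^{-1}}^\ga$. Here we use that $\norm{dE(u)}_{H^{-1}}$ is small, so that the power $2$ is dominated by the power $\ga\le2$.
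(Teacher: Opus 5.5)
Your outline (Lyapunov--Schmidt reduction, an analytic reduced function on $K=\ker L$, then the finite-dimensional \L ojasiewicz inequality) is the route the paper has in mind. The paper gives no argument beyond citing a ``slight modification'' of Simon's Theorem~3. There is a genuine gap, though: that modification is exactly the step you flag as the main obstacle, you do not supply it, and your proposed fixes do not work.

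On $X=H^1\cap L^\infty$ the inverse function theorem is unavailable. Even granting that $\mathcal N:X\to H^{-1}+L^1$ is analytic, $L+P$ does not map $X$ onto $H^{-1}+L^1$. It does not even map $X$ onto $H^{-1}$, since $(L+P)^{-1}$ of $H^{-1}$ data is merely $H^1$. So there is no local inverse $\Psi$ on $X$, and a Lipschitz bound from $L^1$-type data into $H^1$ would be false anyway.

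On $W^{1,p}$ with $p>2$, or on $C^{2,\alpha}$, the theorem does apply. But then $\Psi$ inverts $\mathcal N$ only on a neighbourhood in that stronger topology. That neighbourhood does not contain the $L^\infty\cap H^1$-neighbourhood of the statement. For instance, perturb $\om^*$ by $\delta\chi(x/r)$ with $\delta$ small and fixed and $r\to0$: the $L^\infty$ and $\dot H^1$ norms stay fixed while the $\dot W^{1,p}$ norm tends to $\infty$. So for the maps in question you know neither $v=\Psi(\mathcal N(v))$ nor the bound $\|v-\Psi(Pv)\|_{H^1}\lesssim\|M(v)\|_{H^{-1}}$ on which your third step rests. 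Your ``fallback'' sentence names this goal but gives no argument.

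Two further steps fail for the same reason.
\begin{itemize}
\item $\|d\mathcal E(v)\|_{H^{-1}}$ is not uniformly comparable to $\|dE(u)\|_{H^{-1}}$. In two dimensions a matrix field $B$ with $B\in L^\infty$ and $\nabla B\in L^2$, which is all you control for $d\exp_{\om^*}(v)$, is not a bounded multiplier on $H^1$.
\item $M$ is not Lipschitz from $H^1$ to $H^{-1}$. Your Taylor step and the bound $\|M(\Psi(\xi))\|\lesssim\|M(v)\|_{H^{-1}}$ both assume it implicitly.
\end{itemize}

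The missing idea is to use the inverse function theorem only in a strong space, producing the finite-dimensional analytic family $\{\Psi(\xi)\}$ of smooth maps with uniform $C^2$ bounds. You then prove the comparison estimates for $u$ by hand. Test $dE(u)$ (extrinsically, $dE(u)[\phi]=-\int\tau(u)\cdot\phi$) only against bounded test functions whose multipliers are smooth.

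Concretely, set $w=\Psi(\xi)$ and $e=u-w$, and take for example $\phi=P(w)(e_+-e_-)$. Here $e_\pm$ are the components in the positive and negative spectral subspaces of the Jacobi operator. The negative and null spaces are finite-dimensional and spanned by smooth sections, so $\|\phi\|_{L^\infty}\lesssim\|e\|_{L^\infty}$ and $\|\phi\|_{H^1}\lesssim\|e\|_{H^1}$. Every nonlinear remainder is then bounded by terms like $\int|\nabla e|^2|\phi|\lesssim\|e\|_{L^\infty}\|e\|_{H^1}^2$, which the $L^\infty$-smallness lets you absorb.

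Combine this with testing against the smooth elements of $K$, which controls the kernel component of $e$ and $dE(w)$. This gives $\|e\|_{H^1}+|\nabla f(\xi)|\lesssim\|dE(u)\|_{H^{-1}}$. Expanding $E(u)-E(w)$ directly gives $|E(u)-E(w)|\lesssim\|dE(u)\|_{H^{-1}}^2$. After that, your finite-dimensional conclusion goes through unchanged.
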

As usual, the exponent $\ga$ appearing in this proposition is equal to $2$ if $\om^*$ has no
non-integrable Jacobi fields and otherwise is a (in general not explicitly
known) number $\ga\in (1,2)$.

We note that while the first term in 
$
\norm{f}_{H^1(S^2, g_{S^2})}^2
=
\norm{\na f}_{L^2(S^2, g_{S^2})}^2
+
\norm{f}_{L^2(S^2, g_{S^2})}^2
$ is
conformally invariant, the second one is not. Hence the corresponding
dual norm $\norm{\cdot}_{H^{-1}(S^2, g)}$ of $dE(u)$ is not conformally invariant and neither is the $L^2(S^2,g)$ norm of the tension which we can compute extrinsically as $\tau_g(u)=-\na^{L^2(S^2,g)}E=\Delta_g u+A(u)(\na_gu,\na_g u)$ by viewing $N\hookrightarrow \R^K$.

We now want to use this to show

\begin{prop}\label{prop:Loj-L^2}
Let $\om^* : S^2 \to N$ be a harmonic map into a closed analytic manifold.
Then there exist constants $\eps\in(0,\eps_1]$ and $C<\infty$ 
such that the following holds true for any map 
$u:\R^2\to N$ which
is constant outside of $\D_2$.

Suppose that 
there exist $a\in\D_1$ and a factor $\mu\geq 1$ such that
\beq
\label{ass:Loj-L2}
\norm{\na(u-\om^*\circ \pi_{a,\mu})}_{L^2(\R^2)} + \norm{u-\om^*\circ \pi_{a,\mu}}_{L^\infty(\R^2)}
<\eps \text{ for } \pi_{a,\mu}(x):=\pi\bigl(\mu(x-a)\bigr)
\eeq
where $\pi:\R^2\to S^2$ denotes the inverse stereographic projection. Then
\beq \label{claim:Loj-L2}
\abs{E(u)-E(\om^*)}
\leq
C\,\norm{\tau(u)}_{L^2(\R^2)}^{\ga}
\Bigl(1+\abs{\log(\norm{\tau(u)}_{L^2(\R^2)})}\Bigr)^{\ga/2}.
\eeq
Here $\gamma\in (1,2]$ and $\eps_1>0$ are as in Proposition \ref{prop:Loj-Simon}
\end{prop}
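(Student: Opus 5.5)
The plan is to transfer the problem to $S^2$ via the conformal map $\pi_{a,\mu}$ and then apply Proposition \ref{prop:Loj-Simon}. The energy is conformally invariant. So $v:=u\circ\pi_{a,\mu}^{-1}:S^2\to N$ satisfies $E(v)=E(u)$, and assumption \eqref{ass:Loj-L2} gives
\[
\norm{v-\om^*}_{L^\infty(S^2)}+\norm{\na(v-\om^*)}_{L^2(S^2,g_{S^2})}<\eps\le\eps_1 .
\]
Here $v$ is well defined and in $H^1$ because $u$ is constant outside $\D_2$, so $v$ is constant near the point $\pi(\infty)$. Proposition \ref{prop:Loj-Simon} then yields $\abs{E(u)-E(\om^*)}\le C\norm{dE(v)}_{H^{-1}(S^2)}^\ga$. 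It therefore suffices to prove
\beq\label{plan:H-1}
\norm{dE(v)}_{H^{-1}(S^2)}\le C\norm{\tau(u)}_{L^2(\R^2)}\bigl(1+\abs{\log\norm{\tau(u)}_{L^2(\R^2)}}\bigr)^{1/2}.
\eeq

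To compute the left side, take $\psi\in H^1(S^2)$ with $\norm{\psi}_{H^1(S^2)}\le1$. Using $\tau$ extrinsically and the conformal invariance of the pairing $dE(v)[\psi]=\int \na v\cdot\na\psi$ up to the tangential projection, we get
\[
dE(v)[\psi]=-\int_{\R^2}\tau(u)\cdot\phi\,dx,\qquad \phi:=\psi\circ\pi_{a,\mu}.
\]
The integrand vanishes outside $\D_2$, since $u$ is constant there. By Cauchy--Schwarz it is therefore enough to bound $\norm{\phi}_{L^2(\D_2)}$.

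Two facts are available. First, $\norm{\na\phi}_{L^2(\R^2)}=\norm{\na\psi}_{L^2(S^2)}\le1$, by conformal invariance. Second, $\phi$ has controlled mean on the bubble region: on $\D_{1/\mu}(a)$ the conformal factor is comparable to $\mu^2$, so
\[
\fint_{\D_{1/\mu}(a)}\abs{\phi}\,dx\lesssim\norm{\psi}_{L^2(S^2)}\le1 .
\]
Now $\D_2\subset\D_3(a)$, and the ratio of the radii $3$ and $1/\mu$ is $3\mu$. A dyadic-annuli (or Poincaré plus log) argument then gives the standard two-dimensional estimate
\[
\norm{\phi-\bar\phi_{1/\mu}}_{L^2(\D_3(a))}\lesssim(1+\log\mu)^{1/2}\norm{\na\phi}_{L^2},
\]
where $\bar\phi_{1/\mu}$ denotes the mean of $\phi$ on $\D_{1/\mu}(a)$. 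Concretely, on the annulus of radius $r$ the mean differs from $\bar\phi_{1/\mu}$ by at most $C\sqrt{\log(r\mu)}$; integrating over $\D_3$ gives an $L^2$ bound of order $\sqrt{\log\mu}$. Together with the mean bound this yields $\norm{\phi}_{L^2(\D_2)}\lesssim(1+\log\mu)^{1/2}$, and hence
\[
\norm{dE(v)}_{H^{-1}}\lesssim(1+\log\mu)^{1/2}\norm{\tau(u)}_{L^2}.
\]

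The main obstacle is to replace $\log\mu$ by $\abs{\log\norm{\tau(u)}_{L^2}}$, i.e.\ to show $\mu\lesssim\norm{\tau(u)}_{L^2}^{-c}$. The natural bound runs the other way: a large $\mu$ should force a large tension. The reason is that $u$ is $\eps$-close to a bubble of scale $1/\mu$ concentrated in $\D_{1/\mu}(a)$, yet is constant outside $\D_2$. Intuitively, a nonconstant bubble confined to a small region cannot be nearly harmonic unless the tension is at least of order $\mu^{-1}$ in some negative norm, or $\tau$ is not small in $L^2$.

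I would try the following quantitative test. Pair $\tau(u)$ against the conformal Killing field $x-a$, i.e.\ use the Pohozaev identity
\[
\int_{\D_2}\tau(u)\cdot\bigl((x-a)\cdot\na u\bigr)\,dx=\text{boundary terms}=0,
\]
where the boundary terms vanish because $u$ is constant near $\partial\D_2$. This identity alone is not enough, since it holds for every $u$. Instead I would pair against the Jacobi field generated by the dilation of $\om^*$, cut off at scale $\sim1$.

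Alternatively, I would avoid needing $\mu$ small by a case split.
\begin{itemize}
\item If $\norm{\tau(u)}_{L^2}\ge\mu^{-1}$, then $\log\mu\ge\abs{\log\norm{\tau(u)}_{L^2}}$ in the wrong direction, so this case must instead be handled by a different estimate. Here I would use $\abs{E(u)-E(\om^*)}\lesssim\eps$ together with \dots
\item Otherwise, if $\norm{\tau(u)}_{L^2}\le\mu^{-1}$, the tension is smaller than the scale. I would exploit that the neck region from scale $1/\mu$ to $1$ contributes energy $\gtrsim$ a Dirichlet-type quantity, which bounds $\log\mu$ in terms of $\log(1/\norm{\tau(u)}_{L^2})$ via the $\eps$-regularity decay $\abs{du}\lesssim r^{-1}\norm{\tau}_{L^2}\cdot r$ on annuli.
\end{itemize}
I expect establishing $\log\mu\lesssim1+\abs{\log\norm{\tau(u)}_{L^2}}$, presumably via the second case using the necessary matching of the bubble's value at infinity to the constant outside $\D_2$, to be the crux of the proof.
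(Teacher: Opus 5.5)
Your first step is correct and matches the paper's. Pulling back by $\pi_{a,\mu}$, applying Proposition \ref{prop:Loj-Simon} and using $\norm{\phi}_{L^2(\D_2)}\lesssim(1+\log\mu)^{1/2}\norm{\psi}_{H^1(S^2)}$ gives $\abs{E(u)-E(\om^*)}\lesssim(\log\mu)^{\ga/2}\norm{\tau(u)}_{L^2}^{\ga}$. This settles the case $\mu\le\norm{\tau(u)}_{L^2}^{-c}$ for any fixed $c$.

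The gap is the complementary regime, and your plan for it cannot work: the inequality $\log\mu\lesssim 1+\abs{\log\norm{\tau(u)}_{L^2}}$ that you call the crux is false. Take $\om^*\circ\pi_{\mu}$, glue it to the constant $\om^*(\infty)$ near $\partial\D_1$, and add a bump of fixed small height $\de$ in a tangent direction, supported in $\D_2\setminus\D_1$. This map satisfies \eqref{ass:Loj-L2} and has $\norm{\tau(u)}_{L^2}\sim\de$ independently of $\mu$, so $\mu$ can be sent to infinity. Your heuristic that a concentrated bubble forces $\norm{\tau(u)}\gtrsim\mu^{-1}$ is a lower bound on the tension, i.e.\ $\mu\gtrsim\norm{\tau(u)}^{-1}$, which is the opposite of what you need.

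Your case split has the same problem. The case $\norm{\tau(u)}_{L^2}\le\mu^{-1}$ is already trivial, since there $\log\mu\le\abs{\log\norm{\tau(u)}_{L^2}}$. The case $\norm{\tau(u)}_{L^2}\ge\mu^{-1}$ is the only one needing an idea, and it stops at ``together with \dots''. The bound $\abs{E(u)-E(\om^*)}\lesssim\eps$ cannot close it, because the right-hand side of \eqref{claim:Loj-L2} must tend to zero with the tension.

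The missing idea is that for $\mu\ge\norm{\tau(u)}_{L^2}^{-8}$ one proves a \emph{better} estimate, $\abs{E(u)-E(\om^*)}\lesssim\norm{\tau(u)}_{L^2}^2$, by separating scales. At the intermediate neck radius $r_1\sim\mu^{-1/2}$, the neck estimate \eqref{est:neck-Peter} gives $E(u,\D_{8r_1}\setminus\D_{r_1/8})\lesssim\mu^{-1/2}\le\norm{\tau(u)}_{L^2}^4$. So one splits $u$ into two pieces with $0\le E(u_1)+E(u_2)-E(u)\lesssim\mu^{-1/2}$:
a bubble part $u_1$, equal to $u$ on $\D_{r_1}$ and constant off $\D_{2r_1}$;
a body part $u_2$, equal to $u$ off $\D_{r_1}$ and constant on $\D_{r_1/2}$.

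The $\log\mu$-estimate is then applied not to $u$ but to the rescaled map $u_1(r_1\,\cdot)$. Its concentration factor is only $r_1\mu\sim\mu^{1/2}$. Its tension norm is multiplied by $r_1$ and is therefore $\lesssim\mu^{-1/4}$, with the cut-off tension controlled by $H^2$-estimates on the annulus. Hence $\abs{E(u_1)-E(\om^*)}\lesssim(\log\mu)^{\ga/2}\mu^{-\ga/4}\lesssim\mu^{-1/4}\le\norm{\tau(u)}_{L^2}^2$, using $\ga>1$.

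The body part is uniformly close to a constant map, which is a non-degenerate critical point. Integrating $\abs{\na u_2}^2$ by parts against $u_2-\fint_{\D_3}u_2$ and absorbing the term $\osc(u_2)\int\abs{\na u_2}^2$ gives $E(u_2)\lesssim\norm{\tau(u)}_{L^2}^2+\mu^{-1/2}$. Adding the three bounds closes this regime.
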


Here and in the following $\tau(u)=\Delta u+A(u)(\na u,\na u)$ denotes the tension field with respect to the standard
metric $dx^2$ and norms on subsets of $\R^2$ are computed with respect to this
metric unless specified otherwise.

\begin{rmk} \label{rmk:mu-large}
We note that condition \eqref{ass:Loj-L2} implies that  for $\eps>0$ chosen sufficiently small the factor $\mu$ is guaranteed to be larger than any fixed constant, as desired in our analysis of maps that are close to simple bubble trees. We also note that the claimed estimate is trivially true for maps whose tension is bounded away from zero and that the claim is invariant under translation. In the following, we can hence restrict our attention to maps with
$\norm{\tau(u)}_{L^2(\R^2)}\leq \de_1$
which are constant outside of $\D_3$ and satisfy \eqref{ass:Loj-L2} for $a=0$ and $\mu\geq \mu_0$, where $\de_1>0$ and $\mu_0 >1$ are sufficiently small/large constants that are chosen below and that only depend on $N$ and $\om^*$. 
\end{rmk}

\begin{proof}[Proof of Proposition \ref{prop:Loj-L^2}]
As the $L^\infty$ and $\dot H^1$ norms are conformally invariant the assumption \eqref{ass:Loj-L2} ensures that  
$ \widetilde u:=u\circ \pi_{a,\mu}^{-1}:S^2\to N$
satisfies the assumptions of Proposition~\ref{prop:Loj-Simon}. We hence deduce that 
\beqa \label{est:Loj-dual}
\abs{E(u)-E(\om^*)}
&=
\abs{E(\widetilde u)-E(\om^*)}
\leq
C \norm{dE(\widetilde u)}_{H^{-1}(S^2, g_{S^2})}^\ga
=
C \norm{dE(u)}_{*,(a,\mu)}^\ga,
\eeqa
where $\norm{\cdot}_{*,(a,\mu)}$ denotes the dual norm to the weighted
$H^1$ norm
\beq \label{def:weighted-H1}
\norm{v}_{(a,\mu)}
:=
\norm{v}_{H^1(\R^2,g_{a,\mu})} \text{ for }
g_{a,\mu}
=
\pi_{a,\mu}^*g_{S^2}.
\eeq

We note that
\beqs
\norm{v}_{L^2(\D_2)}
\leq C
(\log\mu)^\half \cdot \norm{v}_{(a,\mu)},
\eeqs
compare e.g. (2.27) of \cite{rupflinharmonic} and hence that
\beqs
C\norm{\tau(u)}_{L^2(\D_2)}
\geq
(\log\mu)^{-1/2}\norm{dE(u)}_{*,(a,\mu)}. 
\eeqs
The estimate \eqref{est:Loj-dual} hence allows us to deduce that 
\beq \label{est:Loj-with-logmu}
\abs{E(u)-E(\om^*)}
\leq
C(\log\mu)^{\ga/2}
\norm{\tau(u)}_{L^2(\R^2)}^\ga
\eeq
holds 
for any $u$ for which the assumptions of  Proposition~\ref{prop:Loj-L^2} are satisfied for $\eps=\eps_1$.

For maps $u$ for which $\log \mu$ is bounded by some fixed multiple of $\abs{\log(\norm{\tau(u)}_{L^2(\R^2)})}$, so in particular whenever
$
\mu
\leq
\norm{\tau(u)}_{L^2(\R^2)}^{-8}
$, 
we hence immediately get the estimate \eqref{claim:Loj-L2} claimed
in Proposition~\ref{prop:Loj-L^2}.

It thus remains to consider the case where 
$\mu
\geq
\norm{\tau(u)}_{L^2(\R^2)}^{-8}$. In this case
 we should expect to obtain stronger estimates since the
$L^2$ norm of the tension strongly weighs the
highly concentrated region while the contribution of the bulk should be controlled quadratically in
terms of the tension since constant maps are
non-degenerate critical points of the energy.
Indeed, we can show

\textbf{Claim:}
There exists $\eps\in (0,\eps_1]$ and $C$ so that 
the  estimate 
\beqs
\abs{E(u)-E(\om^*)}
\leq
C\norm{\tau(u)}_{L^2(\R^2)}^2
\eeqs 
holds for any map $u$ which satisfies the assumptions
of Proposition~\ref{prop:Loj-L^2} for some 
\beq\label{ass:case2}
\mu \geq \norm{\tau(u)}_{L^2(\R^2)}^{-8}.
\eeq

For the proof of this claim we can use 
standard results on the energy
on neck regions of almost harmonic maps, as found e.g. in Lemma~2.9 of \cite{toppingannals}. These imply that there exist constants $\de_{1,2}>0$ and $K$ depending only on $N$ so that for maps $v:\D_1\to N$ with $\norm{\tau(u)}_{L^2(\D_1)}\leq\de_1 $ 
the estimate 
\beq
E\bigl(v,\D_{e r}\setminus \D_{r}\bigr)
\leq
K r
\label{est:neck-Peter}
\eeq 
holds for all $r\in(0,e^{-1})$ for which  
\beq\label{ass:E-small-neck}
E\bigl(v,\D_1\setminus \D_{r^2}\bigr)<\delta_2. 
\eeq

\begin{proof}[Proof of claim]
As discussed above we can assume that $a=0$ and that $\norm{\tau(u)}_{L^2(\R^2)}\leq \de_1$  where $\de_1=\de_1(N)>0$ is as above. For $\eps$ sufficiently small we immediately get from 
\eqref{ass:Loj-L2} that the 
smallness of the energy required in \eqref{ass:E-small-neck} holds for any radius $r\geq \La \mu^{-1/2}$, $\La$ a sufficiently large constant that only depends only on $N$ and $\om^*$. 
We can hence obtain a radius $r_1\sim \mu^{-1/2}$ for which
\beq \label{est:energy-A-larger}
E\bigl(u,\D_{8r_1}\setminus \D_{r_1/8}\bigr)
\leqs
\mu^{-1/2},
\eeq
where here and in the following we write $\leqs$  if the corresponding inequality holds up to a multiplicative constant $C$ that only depends on $\om^*$ and $N$ and use $\sim $ if both $\leqs $ and $\geqs $ hold.  

Setting
$
A:=\D_{2r_1}\setminus \D_{r_1/2}
$ and $ m:=\fint_A u
$ we can now use that 
\beqs
\norm{u-m}_{L^\infty(A)}\leq \osc_A u
\leq
\osc_A(\om^*\circ\pi_\mu)
+
2\norm{u-\om^*\circ\pi_\mu}_{L^\infty}
\leqs
(r_1\mu)^{-1/2}+\eps\leq \mu^{-1/2}+\eps
\eeqs
can be made smaller than any given positive constant by choosing $\eps$ sufficiently small, compare Remark \ref{rmk:mu-large}. In particular, $m$ will be contained in a small tubular neighbourhood of $N$ on which the nearest point projection $\pi_N$ is well defined and smooth.
Fixing cut-off functions 
$\varphi_{1,2}$ with  
\beqs
\varphi_1\equiv1 \text{ on }\D_{r_1}
\text{ and }
\varphi_1\equiv0 \text{ on }(\D_{2r_1})^c,\text{ while } 
\varphi_2\equiv1 \text{ on }\R^2\setminus\D_{r_1}
\text{ and }
\varphi_2\equiv0 \text{ on }\D_{r_1/2}
\eeqs
and with $|\na\varphi_i|^2+|\na^2\varphi_i|
\leqs
r_1^{-2}$, we 
can hence get maps
$$
u_1=\pi_N\bigl(\varphi_1u+(1-\varphi_1)m\bigr) :\R^2\to N \text{ and }
u_2:=\varphi_2u+(1-\varphi_2)m:\R^2\to \R^K\hookleftarrow N
$$
with $
u_1\equiv u$ on $\D_{r_1}$
and  $ u_1\equiv \pi_N(m)$ on $
\R^2\setminus\D_{2r_1},
$ while 
$ u_2\equiv u$ on $ \R^2\setminus\D_{r_1} 
$ 
and $
u_2\equiv m$ on $ \D_{r_1/2},
$ for which 
\beqas
E(u_1)+E(u_2)-E(u)
&\leqs
E(u,A)
+
r_1^{-2} \norm{u-\fint_A u}_{L^2(A)}^2
\leqs
\norm{\na u}_{L^2(A)}^2
\leqs
\mu^{-1/2}
\leqs
\norm{\tau(u)}_{L^2(\R^2)}^4.
\eeqas
The claim hence follows once we show that 
both
\beq
\abs{E(u_1)-E(\om^*)}
\leqs
\norm{\tau(u)}_{L^2(\R^2)}^2
\label{claim:est1}
\eeq
and
\beq
E(u_2)
\leqs
\norm{\tau(u)}_{L^2(\R^2)}^2.
\label{claim:est2}
\eeq

\textbf{Proof of \eqref{claim:est1}.}\\
We first note that as $(r_1 \mu)^{-1}\sim \mu^{-1/2}$ we can bound
$$ 
\norm{u_1-\om^*\circ\pi_{\mu}}_{L^\infty(\R^2)} \leqs \osc_A u + \norm{u-\om^*\circ\pi_{\mu}}_{L^\infty(\R^2)}+\osc_{\R^2\setminus \D_{r_1}}\pi_\mu\leqs  \eps+\mu^{-1/2} 
$$ and 
$$\norm{\na (u_1-\om^*\circ\pi_{\mu})}_{L^2(\R^2)}
\leqs E(u,A)^\half+\norm{\na (u-\om^*\circ\pi_{\mu})}_{L^2(\R^2)} +\norm{\na \pi_\mu}_{L^2(\R^2\setminus \D_{r_1})}\leqs \mu^{-1/4}+\eps.
$$
A sufficiently small choice of $\eps$ hence ensures that $\norm{u_1-\om^*\circ\pi_{\mu}}_{L^\infty(\R^2)}+\norm{\na (u_1-\om^*\circ\pi_{\mu})}_{L^2(\R^2)}\leq \eps_1$.
As these quantities remain invariant as we rescale $\D_{r_1}$ to $\D_1$ we hence deduce that $\tilde u_1:=u_1(r_1 x)$, which is by construction constant outside of $\D_2$, satisfies the assumptions of the proposition with $\tilde \mu:= r_1 \mu\sim \mu^{-1/2}$ in place of $\mu$ and $\eps=\eps_1$. This allows us to apply \eqref{est:Loj-with-logmu} for $\tilde u_1$ and $\tilde \mu$ to deduce that  
\beq \label{est:E-u1}
\abs{E(u_1)-E(\om^*)}=\abs{E(u_1)-E(\om^*)}
\leqs
(\log\tilde \mu)^{\ga/2}
\norm{\tau(\tilde u_1)}_{L^2(\R^2)}^\ga.
\eeq
We now crucially use that the above rescaling significantly reduces the $L^2$ norm of the tension, namely that 
\beq 
\label{est:tension-tilde}
\norm{\tau(\tilde u_1)}_{L^2(\R^2)}=r_1\norm{\tau(u_1)}_{L^2(\R^2)}\leq r_1\norm{\tau(u)}_{L^2(\R^2)}+r_1 \norm{\tau(u_1)}_{L^2(A)}.\eeq 
As the energy of $u$ is small on the larger annulus $A_2:=\D_{4r_1}\setminus\D_{r_1/4}$ standard $H^2$ estimates for almost harmonic maps as found e.g. in \cite[Lemma $2.1$]{dingtian} imply that 
\beqs
\int_{A} |\na^2u|^2+|\na u|^4
\leqs
r_1^{-2}E(u,A_2)
+
\norm{\tau(u)}_{L^2(A_2)}^2.
\eeqs
Combined with the trivial pointwise bound $\abs{\tau(u_1)}\leqs \abs{\na^2 u}+\abs{\na u}^2+r_1^{-2} \abs{u-m}+r_1^{-1}\abs{\na u} \mathbbm{1}_A$ and the Poincar\'e inequality this allows us to deduce that 
$$\norm{\tau(u_1)}_{L^2(A)}\leqs r_1^{-1} E(u,A_2)^{1/2}+\norm{\tau(u)}_{L^2(A_2)}.$$
As $r_1\sim \mu^{-1/2}$ while $E(u,A_2)\leqs \mu^{-1/2}$, compare \eqref{est:energy-A-larger}, we can hence bound  
$$r_1\norm{\tau(u_1)}_{L^2(A)}\leqs  E(u,A_2)^{1/2}+r_1\norm{\tau(u)}_{L^2(A_2)}\leqs \mu^{-1/4}+\mu^{-1/2} \de_1\leqs \mu^{-1/4}.$$
Inserting this, \eqref{est:tension-tilde} and  $\log\tilde\mu\sim \log \mu$ into \eqref{est:E-u1} hence yields the desired bound of   
$$\abs{E(u_1)-E(\om^*)}
\leqs
(\log \mu)^{\ga/2} \mu^{-\gamma/4}\leqs \mu^{-1/4}\leqs \norm{\tau(u)}_{L^2(\R^2)}^2$$
since $\gamma>1$ and since $\mu$ satisfies \eqref{ass:case2}. 
\end{proof}

\paragraph{\textbf{Proof of \eqref{claim:est2}.}}
$ $\\
As $\na \varphi_2$ is supported on $A$ and  $u_2=u+(1-\varphi_2)(m-u)$, we have
pointwise bounds of 
\beqas
|\Delta u_2|
&\leq\varphi_2
|\Delta u|+2
|\na\varphi_2||\na u|+
|\Delta\varphi_2||u-m|
\\
&\leqs
\varphi_2|\tau(u)|
+
\varphi_2|\na u|^2
+
\mathbbm{1}_A\bigl(r_1^{-1}|\na u|+r_1^{-2}|u-m|\bigr)\\
&\leqs
|\tau(u)|
+
|\na u_2|^2
+
\mathbbm{1}_A\bigl(|\na u|^2+r_1^{-1}|\na u|+r_1^{-2}|u-m|\bigr).
\eeqas
As $u$, and thus $u_2$, are constant outside of $\D_3$ we can combine the above estimate with the Poincar\'e inequality, applied on both $\D_3$ and on $A$, to estimate
\beqas
\int_{\R^2}|\na u_2|^2&=\int_{\D_3}|\na u_2|^2
=
-\int_{\D_3}
\bigl(u_2-\fint_{\D_3}u_2\bigr)\cdot\Delta u_2
\\
&\leqs
\norm{\tau(u)}_{L^2(\R^2)}
\norm{\na u_2}_{L^2(\D_3)}
+\osc(u_2) \int_{\D_3} \abs{\na u_2}^2+E(u,A).
\eeqas
As $\osc(u_2)\leqs
\osc_{\D_{r_1}^c}(\om^*\circ\pi_\mu)+\norm{u-\om^*\circ\pi_{\mu}}_{L^\infty}\leqs (\mu r_1)^{-1}+\eps\leqs \mu^{-1/2}+\eps$ is guaranteed to be small if $\eps$ is small, we can  absorb the second term 
to deduce that
\beqas
\int_{\R^2}|\na u_2|^2
&\leqs
\norm{\tau(u)}_{L^2(\R^2)}^2 
+
E(u,A)\leqs \norm{\tau(u)}_{L^2(\R^2)}^2 +\mu^{-1/2} \leqs\norm{\tau(u)}_{L^2(\R^2)}^2, 
\eeqas
using \eqref{est:energy-A-larger} in the second and 
\eqref{ass:case2} in the last step. 

This establishes \eqref{claim:est2} and hence completes the proof of the proposition.
\end{proof}

\section{Weighted \L ojasiewicz estimate and proof of the main theorem}
For $\tau > 0,$ define the Gaussian-weighted $L^2$-norm:
\begin{equation*}
\| \alpha \|_\tau = \sqrt{ \int_{\R^2} |\alpha |^2 \, e^{-\frac{r^2}{4\tau} } \, dV }.
\end{equation*}
For $u : \R^2 \to N,$ also let
$$\Phi_\tau(u) = \frac12 \int |du|^2 e^{ \frac{-r^2}{4 \tau} } \, dV = \frac12 \| du \|_{\tau}^2$$
and
$$\That_\tau(u) = \T(u) - \frac{x}{2 \tau} \intprod du.$$

\begin{thm}\label{thm:weightedloj} Let $(N, h)$ be a compact real-analytic manifold and let $E_0, K > 0$ be given. There exists a constant $C$, an exponent $\gamma\in (1,2]$ and a finite set of critical values $\mathcal{E}_0 \subset [ 0, E_0 ]$ of the Dirichlet energy, depending only on $N, K,$ and $E_0,$ so that the following holds.

Let $u : \R^2 \to N$ be any map with $E(u) \leq E_0.$ Suppose that there exists $\tau > 0$ such that
\begin{equation}\label{weightedloj:assumption}
    \underline{\lambda}_{\tau, 0}(u) \sup_{\D_{\tau}(0)} |du| \leq K.
\end{equation}
Then there exists $E_1 \in \mathcal{E}_0$ such that $u$ obeys a \L ojasiewicz inequality of the form
\begin{equation}\label{weightedloj:loj}
    |\Phi_\tau(u) - E_1 | \leq C \left( 1 + \abs{\log \left( \sqrt{\tau} \| \That(u) \|_\tau \right)} \right)^{\gamma / 2} \left( \sqrt{\tau} \| \That(u) \|_\tau \right)^\gamma.
\end{equation}
\end{thm}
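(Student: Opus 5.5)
We will argue by scaling and contradiction/compactness, reducing to Proposition \ref{prop:Loj-L^2} together with the non-degeneracy of constant maps. First rescale $x\mapsto \sqrt{\tau}x$. The quantities $\Phi_\tau(u)$ and $\sqrt{\tau}\|\That(u)\|_\tau$ are scale invariant. After rescaling we may therefore assume $\tau=1$, with the Gaussian weight $e^{-r^2/4}$ and $\That(u)=\T(u)-\frac{x}{2}\intprod du$. We also reinterpret the hypothesis \eqref{weightedloj:assumption} at unit scale: the outer energy scale $\underline\lambda$ times $\sup|du|$ on the unit disc is at most $K$.

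Set $\delta:=\|\That(u)\|_1$. Since $|\Phi_1(u)|\leq E_0$ and $\mathcal{E}_0\subset[0,E_0]$, the estimate is trivial for $\delta\geq\delta_0$ once $C$ is large. We thus restrict to $\delta$ small. We then invoke a compactness/structure statement for maps with small weighted tension, proved by contradiction using the $\eps$-regularity of \cite{songwaldron}. Under the assumption \eqref{weightedloj:assumption}, for $\delta$ small the map $u$ on $\D_2$ is close to one of two configurations:
\begin{enumerate}
\item[(a)] a smooth map with uniformly bounded gradient, close in $C^{2}$ on compact sets to a self-similar critical point of $\Phi_1$ (an equivariant/shrinker-type solution of $\That=0$ with energy $\le E_0$);
\item[(b)] a simple bubble tree, namely a constant body map near $0$ plus a single bubble $\om^*$ at some point $a$ and scale $\mu^{-1}$.
\end{enumerate}
The hypothesis $\underline\lambda\sup|du|\le K$ is exactly what excludes more than one bubble. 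Moreover, self-similar shrinkers with bounded energy are constant (by the monotonicity/Pohozaev identity in 2D), so in case (a) $u$ is near a constant. Finiteness of $\mathcal E_0$ then comes from the finitely many energy values $E(\om)$ of harmonic spheres with $E\leq E_0$ realized up to compactness. One needs only the Ding–Tian/Lin–Wang gap and compactness, applied to the finite set of limit configurations in an $\eps_1$-cover, with $\gamma$ the minimum over the finitely many $\om^*$ used.

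In case (b) we localize with a cutoff as in the proof of the Claim above. Choose a radius $\rho\in[1,2]$ on whose annulus the energy is small, and let $m$ be the average there. Then form $u_1$, equal to $u$ inside $\D_\rho$ and constant $\pi_N(m)$ outside $\D_{2\rho}$, and $u_2$, equal to $m$ inside and to $u$ outside. Inside $\D_{2}$ the weight $e^{-r^2/4}$ and the drift term $\frac x2\intprod du$ are comparable to $1$ and bounded by $|du|$ respectively. Hence $\|\tau(u_1)\|_{L^2}\lesssim\delta+\|du\|_{L^2(\text{annulus})}$, and the drift part $\|x\intprod du\|_{L^2(\D_2\setminus \text{bubble})}$ should be controlled by $\delta$ plus exponentially small neck energy via \eqref{est:neck-Peter}. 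Proposition \ref{prop:Loj-L^2} applied to a translate and rescale of $u_1$ gives $|E(u_1)-E(\om^*)|\lesssim \delta^\gamma(1+|\log\delta|)^{\gamma/2}$. The weighted energy of $u$ differs from $E(u_1)$ by the weighted energy of $u$ outside the bubble region plus the error $(1-e^{-r^2/4})$ on the bubble. The latter is $\lesssim |a|^2E(\om^*)+\mu^{-2}$.

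The outer part, where $u$ is close to a constant, should be handled by a quadratic weighted \L ojasiewicz estimate for constants. This is the Gaussian Poincaré inequality: $\int|du|^2e^{-r^2/4}\lesssim \|\That(u)\|_1^2$ for maps near a constant, obtained by testing $\That(u)$ against $(u-\bar u)e^{-r^2/4}$, i.e. integrating by parts the drift Laplacian, as in the proof of \eqref{claim:est2}. In case (a) this alone gives the estimate with $E_1=0$.

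The main obstacle is the term $|a|^2$ coming from the location of the bubble in the Gaussian weight. We must show that the bubble center is forced to lie near $0$, with $|a|^2\lesssim\delta^\gamma$ (or better). We would try to get this by testing $\That(u)$ against the translation field $\partial_i u\,e^{-r^2/4}$. This gives $\int \frac{x_i}{2}|du|^2 e^{-r^2/4}\approx\langle\That(u),\partial_iu\rangle_1$ up to neck errors, and so $|a|E(\om^*)\lesssim\delta$, whence $|a|^2\lesssim\delta^2\le\delta^\gamma$. Similarly, testing against the dilation field $x\intprod du$ should bound $\mu^{-2}$ and the remaining error terms. The neck errors are absorbed using \eqref{est:neck-Peter}, as in the Claim.
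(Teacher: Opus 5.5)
Your overall architecture matches the paper's: rescale to $\tau=1$, use compactness to reduce to a single bubble, cut off to a map constant outside $\D_2$, apply Proposition \ref{prop:Loj-L^2}, and compare energies. There is, however, a genuine gap in the step you flag as the obstacle, namely the quantitative control of the error terms, which the tools in your main line do not deliver. Write $\delta=\norm{\That(u)}_1$. To keep all errors at $O(\delta^\gamma)$ you need:
\begin{itemize}
\item $E(u,\D_2\setminus\D_1)\lesssim\delta^2$, since only then does cutting off at unit scale cost $O(\delta)$ in tension and $O(\delta^2)$ in energy;
\item $\norm{\T(u)}_{L^2(\D_2)}\lesssim\delta$, which requires bounding the drift $\frac x2\intprod du$ \emph{on the bubble};
\item $\int_{\D_1}(1-e^{-r^2/4})\abs{du}^2\lesssim\delta^2$;
\item $\int_{\R^2\setminus\D_1}\abs{du}^2e^{-r^2/4}\lesssim\delta^2$.
\end{itemize}
Your tools fall short of these bounds:
\begin{itemize}
\item Compactness only makes the annulus energy small, not $O(\delta^2)$.
\item \eqref{est:neck-Peter} gives only an $O(1)$ bound at unit scale.
\item Your near-constant Gaussian Poincar\'e inequality applied to $u_2$ returns outer energy $\lesssim\delta^2+E(u,\text{annulus})$, which is circular. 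It also needs small oscillation of $u$ on all of $\R^2$, which convergence on compact sets does not provide.
\item The translation identity $\int x_i\abs{du}^2e^{-r^2/4}=-4\int\langle\That(u),\partial_iu\rangle e^{-r^2/4}$ bounds only a first moment. From it, $\abs{a}\lesssim\delta$ follows only once the spread of the bubble and the outer contributions are already controlled.
\end{itemize}
What closes all of this is the dilation test you mention only in passing. It is an exact global identity, not an approximation up to neck errors. Since $\mathrm{div}(e^{-r^2/4}\na u)=e^{-r^2/4}(\Delta u-\frac x2\intprod du)$ and $\mathrm{div}\,x=2$ in dimension two, every finite-energy map satisfies
\[
\int\langle\That(u),x\intprod du\rangle\, e^{-r^2/4}=-\frac14\int r^2\abs{du}^2e^{-r^2/4},
\qquad\text{hence}\qquad
\int r^2\abs{du}^2e^{-r^2/4}\le 16\norm{\That(u)}_1^2 ,
\]
and consequently $\norm{\T(u)}_{L^2(\D_2)}\lesssim\norm{\That(u)}_1$. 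This is \eqref{Thatcontrolsr2duetc}, which the paper imports from \cite{waldronweighted}. Together with $1-e^{-r^2/4}\le r^2/4$, it gives all four bounds at once and shows that the origin is the only possible concentration point. It also makes the location of $a$, the translation test, the near-constant estimate and your (a)/(b) dichotomy unnecessary.

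Second, finiteness of $\mathcal{E}_0$ and uniformity of $C,\gamma$ cannot be justified by ``finitely many energies of harmonic spheres with $E\le E_0$'' plus Ding--Tian/Lin--Wang compactness. A sequence of harmonic spheres can itself converge to a bubble tree with several spheres, and Proposition \ref{prop:Loj-Simon} says nothing there, so that family is not compact modulo M\"obius transformations. You must use \eqref{weightedloj:assumption} once more. It passes to the limiting bubble, so the relevant bubbles lie in the class $\HH_0$ of harmonic maps satisfying \eqref{weightedloj:definitionofscrE0} for all $R\ge1$, which is compact modulo scaling. \L ojasiewicz--Simon then gives finitely many critical values. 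Uniform constants follow by running the whole argument by contradiction, as the paper does.
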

\begin{proof} Rescaling by $\sqrt{\tau},$ we can assume without loss of generality that $\tau = 1.$ The assumption (\ref{weightedloj:assumption}) becomes
\begin{equation}\label{weightedloj:scaledassumption}
    \underline{\lambda}_{1, 0}(u) \sup_{\D_{1}(0)} |du| \leq K.
\end{equation}
Let now $\mathcal{H}_0$ be the set of  harmonic maps $\omega: \R^2 \to N$ that satisfy 
\begin{equation}\label{weightedloj:definitionofscrE0}
    \underline{\lambda}_{R, 0}(\omega) \sup_{\D_{R}(0)} |d\omega| \leq K
\end{equation}
for all $R \geq 1.$
We note that this set $\mathcal{H}_0$ is smoothly compact modulo scaling. Since $N$ is real-analytic, the standard \L ojasiewicz-Simon inequality \cite{leonsimon} implies that the corresponding set of critical values
$$\mathcal{E}_0=\lbrace E(\om): \om\in \HH_0 \rbrace$$
is finite.

Let $u$ be any map as in the statement. First note that the weighted Poincar\'e inequality of \cite[Lemma 2.1]{waldronweighted} gives
\begin{equation}\label{Thatcontrolsr2duetc}
\int r^2 |du|^2 e^{-\frac{r^2}{4}}  + \| \T(u) \|^2_{L^2(\D_2)} \leq C \| \That(u) \|_1^2.
\end{equation}
In particular, we have
\begin{equation}\label{energycontrolonannulus}
E(u, \D_2 \setminus \D_1) \leq C\| \That(u) \|_1^2.
\end{equation}
Similarly, on any compact subset of $\R^2,$ the $L^2$-norm of the tension of $u$ is controlled by the $L^2$-norm of the weighted tension.

We now argue by contradiction: supposing that the result is false, there must exist a sequence of maps $u_i : \R^2 \to N$ as in the statement
but with
\begin{equation}\label{weightedloj:contradloj}
    \frac{|\Phi_1(u_i) - E_1 |}{i} > \left( 1 + \abs{\log  \| \That(u_i) \|_1 }\right)^{\frac12 + \frac{1}{2i}} \| \That(u_i) \|_1^{1 + \frac{1}{i}}
\end{equation}
for each $E_1 \in \mathcal{E}_0.$ In particular,
\beq 
\label{ass:tension-to-zero}\| \That(u_i) \|_1 \to 0\eeq
as $i \to \infty.$

Let $R \geq 1.$ In view of 
\eqref{Thatcontrolsr2duetc} and \eqref{ass:tension-to-zero} we clearly have
\begin{equation*}
    \lambda_{R,0}(u_i) \to 0
\end{equation*}
as well as
\begin{equation*}
    \lambda_{R,0}(u_i) = \lambda_{1,0}(u_i)
\end{equation*}
for $i$ sufficiently large. Thus (\ref{weightedloj:scaledassumption}) gives
\begin{equation}\label{weightedloj:uiassumption}
    \underline{\lambda}_{R, 0}(u_i) \sup_{\D_{R}(0)} |du_i|  \leq K
\end{equation}
for $i$ sufficiently large.

In view of the tension control, we can pass to a subsequence that converges in the bubble-tree sense on $\R^2.$ Due to (\ref{Thatcontrolsr2duetc}), the origin is the only possible energy-concentration point. 
Moreover, due to (\ref{weightedloj:assumption}), there is only a single bubble $\omega^*:S^2\to N$ at the origin. The standard bubbling analysis for almost-harmonic maps provides $\mu_i \to \infty$ and $a_i \to 0$ such that
\beq
\label{weightedloj:Loj-L2}
\norm{\na(u_i-\om^*\circ \pi_{a_i,\mu_i})}_{L^2(\D_2(0))} + \norm{u-\om^*\circ \pi_{a_i,\mu_i}}_{L^\infty(\D_2(0))}
\to 0.
\eeq
Furthermore, the bound (\ref{weightedloj:uiassumption}) implies that $\om^*\circ \pi_{a_i,\mu_i}$ satisfies (\ref{weightedloj:definitionofscrE0}); hence $E(\om^*\circ \pi_{a_i,\mu_i}) = E(\omega^*)$ belongs to $\mathcal{E}_0.$

We now cut off $u_i$ so that it is constant outside of $\D_2$ as required to apply Proposition \ref{prop:Loj-L^2}. For this we can proceed as in the proof of Proposition \ref{prop:Loj-L^2} above, compare also \cite[Lemma 3.7]{waldronlojonS2} and Topping \cite[Lemma 2.23]{toppingannals}, to obtain a map $w_i$ that agrees with $u_i$ on $\D_1,$ is constant outside $\D_2,$ and satisfies
\begin{equation}\label{Ewicontrol}
E(w_i, \D_2 \setminus \D_1) + \|\T(w_i) \|_{L^2(\R^2)}^2 \lesssim E(u_i, \D_2 \setminus \D_1) + \|\T(u_i) \|_{L^2(\R^2)}^2 \lesssim \| \That(u_i)\|_{1}^2,
\end{equation}
as well as (\ref{weightedloj:Loj-L2}) with $w_i$ in place of $u_i.$ Applying Proposition \ref{prop:Loj-L^2} to $w_i,$ we obtain
\begin{equation*}
    |E(w_i) - E(\omega^*)| \lesssim \,\norm{\T(w_i)}_{L^2(\R^2)}^{\ga}
\Bigl(1+\abs{\log(\norm{\T(w_i)}_{L^2(\R^2)})}\Bigr)^{\ga/2}
\end{equation*}
for an exponent $\gamma > 1.$
Exchanging $w_i$ for $u_i$ using (\ref{energycontrolonannulus}) and (\ref{Ewicontrol}), we get
\begin{equation*}
    |E(u_i, \D_1) - E(\omega^*)| \lesssim \,\norm{\That(u_i)}_{1}^{\ga}
\Bigl(1+\abs{\log(\norm{\That(u_i)}_{1}}\Bigr)^{\ga/2}.
\end{equation*}
We further have
$$\left| E(u_i, \D_1) - \frac12 \int_{\D_1} |du_i|^2 e^{\frac{-r^2}{4}} \right| \leq \int_{\D_1} r^2 |du_i|^2 \lesssim \| \That(u_i) \|_1^{2}.$$
Finally, we have
$$\int_{\R^2 \setminus \D_1} |du_i|^2 e^{\frac{-r^2}{4}} \lesssim \int_{\R^2 \setminus \D_1} r^2 |du|^2 e^{\frac{-r^2}{4}} \lesssim \| \That(u_i) \|_1^{2}.$$
Combining the last three estimates, we obtain (\ref{weightedloj:loj}) for $u = u_i$ and $E_1 = E(\omega^*).$ But for $i$ sufficiently large, this contradicts (\ref{weightedloj:contradloj}).
\end{proof}

\begin{proof}[Proof of Theorem \ref{thm:main}]
We can take $x = 0$ without loss of generality. The assumption (\ref{main:assumption}) implies (\ref{weightedloj:assumption}), with $\tau = \sqrt{T - t},$ for times $t$ sufficiently close to $T.$ By the previous theorem, for each such $t,$ the \L ojasiewicz inequality (\ref{weightedloj:loj}) is satisfied for $u = u(t).$ Since the quantity $\Phi_{\sqrt{T - t}}(u(t))$ is monotone-decreasing (see \cite{struwehmsurfaces} or \cite[Theorem 3.2]{waldronweighted}) and the set $\mathcal{E}_0$ is finite, by further advancing $t,$ we can assume that (\ref{weightedloj:loj}) is satisfied for the same value $E_1 \in \mathcal{E}_0$ at all times. The hypothesis of \cite[Corollary 4.3]{waldronweighted} is therefore satisfied, giving the desired conclusions.
\end{proof}

\bibliographystyle{abbrv}
\bibliography{biblio}

\textsc{Melanie Rupflin} and \textsc{Sebastian Woodward}, Mathematical Institute, University of Oxford.

\textsc{Alex Waldron}, University of Wisconsin, Madison.

\end{document}